\def \red {\textcolor{red} }
\begin{document}

	\bibliographystyle{plain}
		
		\pagestyle{myheadings}
		\thispagestyle{empty}
		\newtheorem{theorem}{Theorem}
		\newtheorem{corollary}[theorem]{Corollary}
		\newtheorem{definition}{Definition}
		\newtheorem{guess}{Conjecture}
		\newtheorem{problem}{Problem}
			\newtheorem{claim}[theorem]{Claim}
		\newtheorem{question}{Question}
		\newtheorem{lemma}[theorem]{Lemma}
		\newtheorem{proposition}[theorem]{Proposition}
		\newtheorem{observation}[theorem]{Observation}
		\newenvironment{proof}{\noindent {\bf
				Proof.}}{\hfill\rule{3mm}{3mm}\par\medskip}
		\newcommand{\remark}{\medskip\par\noindent {\bf Remark.~~}}
		\newcommand{\pp}{{\it p.}}
		\newcommand{\de}{\em}
		\newtheorem{example}{Example}

\title{\bf Minimum non-chromatic-$\lambda$-choosable graphs}

\author{Jialu Zhu\thanks{Department of Mathematics, Zhejiang Normal University, Email: jialuzhu@zjnu.edu.cn, }         \and
	Xuding Zhu\thanks{Department of Mathematics, Zhejiang Normal University, Email: xdzhu@zjnu.edu.cn, Grant Numbers: NSFC 11971438, U20A2068. ZJNSFC   LD19A010001.}}

\maketitle


\begin{abstract}

  For a multi-set $\lambda=\{k_1,k_2, \ldots, k_q\}$ of positive integers,
  let $k_{\lambda} = \sum_{i=1}^q k_i$.
  A $\lambda$-list assignment of $G$ is a list assignment $L$ of $G$ such that the colour set $\bigcup_{v \in V(G)}L(v)$ can be partitioned into the disjoint union $C_1 \cup C_2 \cup \ldots \cup C_q$ of $q$ sets so that for each $i$ and each vertex $v$ of $G$, $|L(v) \cap C_i| \ge k_i$. We say $G$ is $\lambda$-choosable if $G$ is $L$-colourable for any $\lambda$-list assignment $L$ of $G$. The concept of $\lambda$-choosability puts $k$-colourability and $k$-choosability in the same framework: If $\lambda = \{k\}$, then $\lambda$-choosability is equivalent to $k$-choosability; if $\lambda$   consists of $k $ copies of $1$, then   $\lambda$-choosability is equivalent to $k $-colourability.
If $G$ is $\lambda$-choosable, then $G$ is $k_{\lambda}$-colourable.
On the other hand, there are $k_{\lambda}$-colourable graphs that are not $\lambda$-choosable, provided that  $\lambda$   contains an integer larger than $1$. Let $\phi(\lambda)$ be the minimum number of vertices in a  $k_{\lambda}$-colourable non-$\lambda$-choosable graph. This paper determines the value of $\phi(\lambda)$ for all $\lambda$.

\medskip

\noindent  {\bf Keywords:}   $\lambda$-list assignment, chromatic-choosable graph, non-chromatic-$\lambda$-choosable graph.
\end{abstract}

\section{Introduction}
 A \emph{proper colouring} of a graph $G$ is a mapping $f:V(G) \rightarrow \mathbb{N}$ such that $f(u) \neq f(v)$ for any edge $uv$ of $E(G)$. 
The \emph{chromatic number} $\chi(G)$ of   $G$ is the minimum positive integer $k$ such that $G$ is $k$-colourable, i.e., there is a proper colouring $f$ of $G$ using colours  from $  \{1,2,\ldots,k\}$.
 The \emph{choice number} $ch(G)$ of $G$ is the minimum positive integer $k$ such that $G$ is $k$-choosable, i.e., if $L$ is a list assignment which assigns to each vertex $v$ a set $L(v) \subseteq \mathbb{N}$ of at least $k$ integers as permissible colours, there is a proper colouring $f$ of $G$ such that $f(v) \in L(v)$ for each vertex $v$. 

It follows from the definitions that $\chi(G) \le ch(G)$ for any graph $G$, and it was shown in \cite{ERT1980} that bipartite graphs can have arbitrarily large choice number. An interesting problem is for which graphs   $G$, $\chi(G)=ch(G)$. Such graphs are called {\em chromatic-choosable}.
Chromatic-choosable graphs  {have} been studied extensively in the literature. There are a few challenging conjectures that assert certain families of graphs are chromatic-choosable. The most famous problem concerning this concept is perhaps the {\em list colouring conjecture}, which asserts that line graphs are chromatic-choosable \cite{BH1985}. Another problem concerning chromatic-choosable graphs that has attracted a lot of attention is the minimum order of a non-chromatic-choosable graph with given chromatic number. 
For a positive integer $k$, let
 $$\phi(k)=\min\{n: \text{ there exists a non-$k$-choosable $k$-chromatic $n$-vertex graph}\}.$$ 
Ohba \cite{Ohba2002} conjectured that $\phi(k) \ge 2k+2$. In other words, $k$-colourable graphs on at most $2k+1$ vertices are $k$-choosable.
This conjecture  was studied in many papers \cite{KMZ2014, NWWZ2015,NRW2015,Ohba2002,Ohba2004,RS2005,KSW2011,SHZL2009,SHZWZ2008}, and was  finally confirmed by  Noel, Reed and Wu  \cite{NRW2015}. This upper \red{lower} bound is tight if $k$ is even, i.e., $\phi(k)=2k+2$ when $k$ is even.   Noel \cite{Noel} further conjectured that if $k$ is odd, then $k$-colourable graphs on at most $2k+2$ vertices are also $k$-choosable.   Recently, the authors of this paper confirmed Noel's conjecture \cite{zhuzhu}, and   determined the value of $\phi(k)$ for all $k$. 
\begin{theorem}  
	\label{thm-phik}
	For   $k \ge 2$, \[
	\phi(k) = \begin{cases} 2k+2, &\text{ if $k$ is even},\cr 
	2k+3, &\text{ if $k$ is odd}.\cr
	\end{cases}
	\]
\end{theorem}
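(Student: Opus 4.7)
The plan is to establish matching upper and lower bounds on $\phi(k)$, handling even and odd $k$ separately. For each parity I need (i) an explicit $k$-colourable non-$k$-choosable graph realising the claimed bound, and (ii) the assertion that every $k$-colourable graph on fewer vertices is $k$-choosable.

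For the upper bounds I would exhibit witnesses in the family of complete multipartite graphs. When $k$ is even, known examples on $2k+2$ vertices (complete multipartite graphs in which most parts have size $2$, together with a few parts of size $3$ or $4$) admit bad $k$-list assignments drawn from a small colour pool via a Hall-type obstruction. When $k$ is odd, the analogous $2k+2$-vertex candidates turn out to be $k$-choosable (this is exactly the content of Noel's conjecture), so a slightly larger graph on $2k+3$ vertices is required; one produces a complete multipartite graph of that order together with a carefully designed list assignment admitting no system of distinct representatives.

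For the lower bounds I would invoke the two cornerstone results from the literature. Ohba's conjecture, proved by Noel, Reed and Wu \cite{NRW2015}, asserts that every $k$-colourable graph on at most $2k+1$ vertices is $k$-choosable; combined with the even-$k$ construction this gives $\phi(k)=2k+2$ for even $k$. Noel's conjecture, proved by the authors in \cite{zhuzhu}, strengthens this to: when $k$ is odd, every $k$-colourable graph on at most $2k+2$ vertices is still $k$-choosable; combined with the odd-$k$ construction this yields $\phi(k)=2k+3$.

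The main obstacle is proving Noel's conjecture itself, i.e.\ the lower bound for odd $k$. The expected strategy is to start from a vertex-minimal counterexample $G$ with a bad $k$-list assignment $L$, reduce to the case that $G$ is complete multipartite, and then exploit strong constraints on the part sizes, the intersection pattern of the lists $L(v)$, and the size of the full colour pool $\bigcup_{v}L(v)$. The argument closes through a parity-sensitive Hall-type step that fails by precisely one unit and so is available exactly when $k$ is odd and $|V(G)|=2k+2$; locating this slack and showing that it suffices is the delicate part of the whole programme.
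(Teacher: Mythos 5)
This theorem is not proved in the paper at all: it is imported verbatim from the authors' earlier work \cite{zhuzhu}, with the even-$k$ lower bound attributed to Noel, Reed and Wu \cite{NRW2015} (Ohba's conjecture) and the odd-$k$ lower bound being Noel's conjecture, confirmed in \cite{zhuzhu}. Your assembly of the result — Ohba's conjecture for the lower bound when $k$ is even, Noel's conjecture when $k$ is odd, and complete multipartite witnesses for the upper bounds — is exactly how the statement is established in the literature and how the present paper uses it, so at the level of citation your outline matches the intended derivation. For the record, the upper-bound witnesses you allude to do exist and are visible in this paper: for even $k$ the graphs $K_{4,2\star(k-1)}$ and $K_{3\star(k/2+1),1\star(k/2-1)}$ on $2k+2$ vertices (Theorem \ref{thm-noel} and Theorem \ref{unique3}), and for odd $k$ the graph $K_{5,2\star(k-1)}$ on $2k+3$ vertices, which is the $a=0$, $b=(k-3)/2$, $c=1$ instance of the construction in Lemma \ref{upper lemma 1}.

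The genuine gap is in your treatment of the odd-$k$ lower bound. If you are permitted to cite \cite{zhuzhu}, the sketch is unnecessary; if you are not, then what you offer — ``a parity-sensitive Hall-type step that fails by precisely one unit'' located in a vertex-minimal counterexample — is not a proof and does not identify the actual mechanism. The real argument reduces to complete $k$-partite graphs on $2k+2$ vertices, invokes the classification of the only two non-$k$-choosable such graphs (both occurring only for even $k$), and for those two graphs analyses the explicit structure of every bad $k$-list assignment (as in Theorem \ref{unique3}) to extract a parity contradiction; none of these steps is recoverable from your description. Similarly, for the upper bounds you assert the existence of a list assignment ``admitting no system of distinct representatives'' without constructing or verifying one, and verifying non-colourability of such assignments is the substantive part of that direction (compare the length of the verification inside Lemma \ref{upper lemma 1}). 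So the proposal is a correct map of which known results to combine, but it does not contain a proof of the one ingredient that is actually hard.
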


The concept of {$\lambda$-choosability} is a refinement of choosability introduced in \cite{zhu2019}.
Assume that $\lambda=\{k_1,k_2, \ldots, k_q\}$ is a multi-set of positive integers. Let $k_{\lambda} = \sum_{i=1}^q k_i$ and $|\lambda| = q$.  
A {\em $\lambda$-list assignment} of $G$ is a list assignment $L$ such that the colour set $\bigcup_{v \in V(G)}L(v)$ can be partitioned into the disjoint union $C_1 \cup C_2 \cup \ldots \cup C_q$ of $q$ sets so that for each $i$ and each vertex $v$ of $G$, $|L(v) \cap C_i| \ge k_i$. Note that for each vertex $v$, $|L(v)| \ge \sum_{i=1}^qk_i = k_{\lambda}$. So a $\lambda$-list assignment $L$ is a $k_{\lambda}$-list assignment with some restrictions on the set of possible lists.
We say $G$ is \emph{$\lambda$-choosable} if $G$ is $L$-colourable for any $\lambda$-list assignment $L$ of $G$.

For a positive integer $a$, let $m_{\lambda}(a)$ be the multiplicity of $a$ in $\lambda$. If $m_{\lambda}(a)=m$, then instead of writing $m$ times the integer $a$, we may write $a \star m$. For example, $\lambda=\{1 \star k_1, 2\star k_2,  3\}$ means that $\lambda$ is a  multi-set consisting of $k_1$ copies of $1$, $k_2$ copies of $2$ and one copy of $3$.
 If   $\lambda=\{k  \}$, then $\lambda$-choosability is the same as $k $-choosability; if  
$\lambda = \{1 \star k\}$, then
$\lambda$-choosability is equivalent to $k $-colourability. 
So the concept of $\lambda$-choosability puts $k$-choosability and $k$-colourability in the same framework.

Assume that $\lambda = \{k_1,k_2,\ldots, k_q\}$ and $\lambda'= \{k'_1,k'_2,\ldots,k'_p\}$. We say $ \lambda'$ is a {\em refinement} of $\lambda$ if $p \ge q$ and there is a partition $I_1 \cup I_2 \cup \ldots \cup I_q$ of $\{1,2,\ldots, p\}$ such that $\sum_{j \in I_t}k'_j = k_t$ for $t=1,2,\ldots, q$. We say $\lambda'$ is obtained from $\lambda$ by increasing some parts if $p=q$ and $k_t \le k'_t$ for $t=1,2,\ldots, q$. We
write $\lambda \le \lambda'$ if 
$\lambda'$ is  a  refinement of $\lambda''$, and $\lambda''$ is obtained from $\lambda$ by increasing some parts. 
It follows from the definitions that if $\lambda \le \lambda'$, then every $\lambda$-choosable graph is $\lambda'$-choosable. Conversely, it was proved in \cite{zhu2019} that if $\lambda \not\le \lambda'$, then there is a $\lambda$-choosable graph which is not $\lambda'$-choosable. 
In particular,  $\lambda$-choosability implies $k_{\lambda}$-colourability, and if $\lambda \ne \{1 \star k_{\lambda}\}$, then
there are  $k_{\lambda}$-colourable graphs that are not $\lambda$-choosable.

All the partitions $\lambda$ of a  positive integer $k$ are sandwiched between $\{k\}$ and $\{1 \star k\}$   in the above order. As observed above, $\{k \}$-choosability is the same as $k $-choosability, and $\{1 \star k\}$-choosability is equivalent to $k $-colourability. For other partitions $\lambda$ of $k $,  $\lambda$-choosability reveals a complex hierarchy of colourability of graphs sandwiched between $k $-colourability and $k $-choosability. 

The framework of $\lambda$-choosability provides room to explore strengthenings of   colourability and choosability results.  For example, it was proved by Kermnitz and Voigt that there are planar graphs that are not $\{1,1,2\}$-choosable \cite{KV2018}. This result strengthens  Voigt's result that there are non-4-choosable planar graphs, and shows that the Four Colour Theorem is sharp in the sense of $\lambda$-choosability. This result is further generalized in \cite{YYX}, where it is shown that for $n \ge 5$, there are $K_n$-minor free graphs that are not $\{1 \star (n-3),2\}$-choosable. This result shows the sharpness of  Hadwiger's conjecture in the framework of $\lambda$-choosability. As another example,
Kim and Park \cite{KP} showed that for any positive integer $k$, there are graphs $G$ such that $G^k$ is not chromatic-choosable. 
This result refutes    a conjecture of Woodall and Kostochka \cite{WK}.
A combination of a result in \cite{KP} and a result in \cite{zhuzhulambda} shows that there is a graph $G$, $G^k$ is not $\{1 \star (\chi(G^k)-2),2\}$-choosable. 

The framework of $\lambda$-choosability also provides room to explore
weakenings of difficult conjectures.  For example, it is known that   for any graph $G$, if $k_{\lambda} = \chi'(G)$ and each integer in $\lambda$ is at most 2, then 
$L(G)$ is $\lambda$-choosable \cite{zhu2019}. On the other hand, the following question remains open:  \emph{Is it true that for any graph $G$, the line graph $L(G)$ of $G$ is $\{1 \star (\chi'(G)-3), 3\}$-choosable? }

In this paper, we are interested in minimum non-chromatic-$\lambda$-choosable graphs. Similar to the definition of $\phi(k)$, for a  multi-set $\lambda$ of positive integers, we define $\phi(\lambda)$ as follows:

\begin{definition}
	Assume $\lambda $ is a multi-set of positive integers.
	Let $$\phi(\lambda)=\min\{n: \text{ there exists a non-$\lambda$-choosable $k_{\lambda}$-chromatic $n$-vertex graph}\}.$$
\end{definition}

If $\lambda=\{k\}$, then $\phi(\lambda)=\phi(k)$. We extend Theorem \ref{thm-phik} and  determine the value of $\phi(\lambda)$ for all $\lambda$. 

If $\lambda'$ is a refinement of $\lambda$, then  $\lambda \le \lambda'$ and $k_{\lambda} = k_{\lambda'}$. As every $\lambda$-choosable graph is $\lambda'$-choosable, we have  $\phi(\lambda') \ge \phi(\lambda)$. In particular,
	as any multi-set $\lambda$ is a refinement of $\{k_{\lambda}\}$, we conclude that
	\[
	\phi(\lambda) \ge \phi(k_{\lambda}) = \begin{cases} 2k_{\lambda} +2, &\text{ if $k_{\lambda}$ is even}, \cr
	 2k_{\lambda} +3, &\text{ if $k_{\lambda}$ is odd}.
	 \end{cases} 
	\]

If $\lambda = \{1 \star k\}$, then $\lambda$-choosable is equivalent to $k$-colourable.
In this case, we set $\phi(\lambda) = \infty$. We call such a multi-set $\lambda$ trivial. In the following,
we only consider non-trivial multi-sets of positive integers.

For a non-trivial multi-sets $\lambda$ of positive integers, it is natural that $\phi(\lambda)$ also highly depends on the number of $1$'s in $\lambda$.
Let $m_{\lambda}({\rm odd})$   be the number of integers in $\lambda$ that are odd.  
The following result was proved in  \cite{zhuzhulambda}.
\begin{theorem}
	\label{thm-previous}
	For any non-trivial multi-set $\lambda$ of positive integers, $$2k_{\lambda}+m_{\lambda}(1)+2 \leqslant \phi(\lambda ) \leqslant \min \{2k_{\lambda}+m_{\lambda}({\rm odd})+2, 2k_{\lambda}+5   m_{\lambda}(1)+3\}.$$
\end{theorem}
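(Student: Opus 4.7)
The plan is to prove the lower bound and the two upper bounds in the ``min'' separately, the lower bound being the most intricate piece.

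For the \emph{lower bound} $\phi(\lambda) \ge 2k_\lambda + m_\lambda(1) + 2$, I would show that every $k_\lambda$-chromatic graph on at most $n_0 := 2k_\lambda + m_\lambda(1) + 1$ vertices is $\lambda$-choosable, by induction on $s := m_\lambda(1)$. The base case $s = 0$ is exactly the lower bound half of Theorem \ref{thm-phik}. For the inductive step, write $\lambda = \{1\} \cup \lambda'$ and fix a $\lambda$-list assignment $L$ of $G$ with colour partition $C \cup D_1 \cup \ldots \cup D_t$, where $|L(v) \cap C| \ge 1$ for every $v$ and each $D_j$ corresponds to a part $\ge 2$ of $\lambda'$. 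The core step is to locate a colour $c^* \in C$ together with an independent set $I$ of size at least $3$ with $c^* \in L(v)$ for all $v \in I$; colouring $I$ with $c^*$ and deleting $c^*$ from the remaining lists then produces a $\lambda'$-list assignment on $G - I$, which has at most $n_0 - 3 = 2k_{\lambda'} + m_{\lambda'}(1) + 1$ vertices and hence is $\lambda'$-choosable by the inductive hypothesis. The existence of such a pair $(c^*, I)$ would be extracted via a Hall-type argument on the bipartite graph between the vertices of $G$ and the colours of $C$, exploiting the $k_\lambda$-chromaticity of $G$ to produce the needed large monochromatic independent set.

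For the \emph{first upper bound} $\phi(\lambda) \le 2k_\lambda + m_\lambda(\text{odd}) + 2$, I would construct an explicit $k_\lambda$-chromatic complete multipartite graph $G_\lambda$ on $2k_\lambda + m_\lambda(\text{odd}) + 2$ vertices together with an adversarial $\lambda$-list assignment. The graph is assembled blockwise: for each odd $k_i \in \lambda$ one attaches $k_i$ parts of size $2$ plus one extra vertex on a new part; for each even $k_i$ one attaches $k_i$ parts of size $2$; finally two further vertices are appended on a chosen part. A block-diagonal $\lambda$-list assignment, in which the colours of $C_i$ are confined to the $i$-th block, reduces non-$\lambda$-choosability to a classical Erd\H{o}s--Rubin--Taylor obstruction within each block. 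For the \emph{second upper bound} $\phi(\lambda) \le 2k_\lambda + 5m_\lambda(1) + 3$, which improves upon the first when $\lambda$ has many odd non-$1$ parts but few $1$'s, I would start from a small non-$k$-choosable $k$-chromatic graph $H$ with $k = k_\lambda - m_\lambda(1)$, supplied by Theorem \ref{thm-phik} on at most $2k + 3$ vertices, and attach one compact $5$-vertex forcing gadget per $1$-part of $\lambda$. Each gadget should be designed so that in any $\lambda$-colouring the associated $1$-class colour is forced onto a designated gadget vertex, leaving the residual list assignment on $H$ as an impossible $\{k\}$-list-colouring.

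The \emph{main obstacle} is the lower bound. The Noel--Reed--Wu proof behind Theorem \ref{thm-phik} is already delicate, and the extension here must carry a Hall-type peeling argument through $m_\lambda(1)$ rounds while guaranteeing at each step that the monochromatic independent set extracted has size at least $3$ and that the reduced list assignment remains of $\lambda'$-type. Handling the interplay between the singleton classes $C_i$ and the larger classes $D_j$ — so that no peel invalidates the structure required for the next one — is where the combinatorial crux lies.
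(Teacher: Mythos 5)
The paper itself does not prove Theorem~\ref{thm-previous}; it imports it from \cite{zhuzhulambda}. Judging your sketch on its own merits and against the techniques this paper does use, each of the three bounds has a genuine gap. The worst is the first upper bound: a ``block-diagonal $\lambda$-list assignment, in which the colours of $C_i$ are confined to the $i$-th block,'' is not a $\lambda$-list assignment at all, since by definition \emph{every} vertex $v$ must satisfy $|L(v)\cap C_i|\ge k_i$ for \emph{every} $i$; the colours of $C_i$ therefore appear in the lists of all vertices, not only those of the $i$-th block. (If block-diagonal assignments were admissible, one bad block together with singleton parts elsewhere would already contradict the lower bound $\phi(\lambda)\ge 2k_{\lambda}+2$.) Moreover your block for an odd $k_i$ contributes $k_i+1$ parts, so the assembled complete multipartite graph has $k_{\lambda}+m_{\lambda}({\rm odd})$ parts and is not $k_{\lambda}$-chromatic. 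Compare Lemma~\ref{upper lemma 1}: there every list meets every class, and non-colourability comes from a global counting argument rather than from one block failing in isolation.

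For the lower bound, the ``core step'' you identify as the crux is both unachievable and unnecessary. Unachievable: a partite set $P$ of size $3$ may have $L(v)\cap C$ pairwise disjoint over $v\in P$, so no common colour $c^*$ exists, and no Hall-type argument manufactures one; also, if $I$ is merely an independent triple and not a whole partite set, $\chi(G-I)$ need not drop to $k_{\lambda'}$, so the inductive hypothesis does not apply. Unnecessary: after reducing to a complete $k_{\lambda}$-partite $G$, either all parts have size at most $2$ (then $|V(G)|\le 2k_{\lambda}+1$ and Ohba's theorem finishes), or some part $P_1$ has $|P_1|\ge 3$; since $P_1$ is independent you may colour each $v\in P_1$ by \emph{any} colour of $L(v)\cap C$, delete $P_1$ and the entire class $C$, and recurse --- exactly the reduction in Claim~\ref{clm-p_1}. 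For the second upper bound, your vertex count is $2k+3+5m_{\lambda}(1)=2k_{\lambda}+3m_{\lambda}(1)+3$ (which, if achieved, would be the sharper bound of Theorem~\ref{thm-main}, not the claimed $2k_{\lambda}+5m_{\lambda}(1)+3$), and the construction is left entirely open at its two hard points: every vertex of $H$ is also required to hold a colour from each singleton class, so the gadgets must somehow neutralize those colours for all of $H$; and the bad $k$-list assignment of $H$ must simultaneously be realizable as the non-singleton portion of a genuine $\lambda$-list assignment, which Section~2 of the paper shows can fail. As written, none of the three inequalities is established.
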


In this paper, we determine the value of $\phi(\lambda)$ for all  $\lambda$.

\begin{theorem}
	\label{thm-main}
Assume  $\lambda$  is a non-trivial multi-set of positive integers. Then 

$$\phi(\lambda)=\min\{2k_{\lambda}+m_{\lambda}({\rm odd})+2, 2k_{\lambda}+3m_{\lambda}(1)+3\}.$$
\end{theorem}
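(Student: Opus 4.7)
Theorem \ref{thm-previous} already supplies the first part of the upper bound, namely $\phi(\lambda)\leq 2k_{\lambda}+m_{\lambda}({\rm odd})+2$. The remaining work therefore splits into two pieces: sharpening the second upper bound of Theorem \ref{thm-previous} from $5m_{\lambda}(1)+3$ down to $3m_{\lambda}(1)+3$, and proving the matching lower bound $\phi(\lambda)\geq\min\{2k_{\lambda}+m_{\lambda}({\rm odd})+2,\, 2k_{\lambda}+3m_{\lambda}(1)+3\}$. Theorem \ref{thm-phik} will serve as the base case of the inductive arguments on both sides.

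For the upper bound construction, the plan is to write $\lambda=\{1\star s\}\cup\mu$ with $s=m_{\lambda}(1)$ and $\mu$ containing no $1$-parts, and then glue to the extremal graph $H$ for $\mu$ (supplied by the appropriate case of Theorem \ref{thm-phik}, so $|V(H)|$ is roughly $2k_{\mu}+3$) one three-vertex gadget per $1$-part of $\lambda$. Each gadget should be designed so that the fresh one-element class $C_i=\{c_i\}$ cannot properly colour all three of its vertices in any extension of the bad list assignment of $H$. Careful book-keeping should then yield a graph on $|V(H)|+3s=2k_{\lambda}+3s+3$ vertices having chromatic number exactly $k_{\lambda}$ and witnessing non-$\lambda$-choosability. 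The delicate point is to arrange that each gadget costs only three vertices rather than five; this is achieved by sharing colour constraints between the new gadget vertices and vertices already present in $H$.

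For the lower bound the strategy is induction on $|\lambda|+k_{\lambda}$, with base case $|\lambda|=1$ handled by Theorem \ref{thm-phik}. Given a minimum counterexample $G$ with $\lambda$-list assignment $L$ and partition $C_1\cup\cdots\cup C_q$, the reduction goes as follows: if $k_i\geq 2$ for some $i$, pick a colour $c\in C_i$ together with an independent set $I\subseteq\{v:c\in L(v)\}$ via a Hall-type matching argument, colour $I$ with $c$, and apply induction to $G-I$ under the parameter $\lambda'$ obtained from $\lambda$ by decreasing $k_i$ by one. When $k_i$ is odd (becoming even) the term $m_{\lambda'}({\rm odd})$ drops by one, which is what produces the $+m_{\lambda}({\rm odd})$ contribution; when $k_i=2$ becomes $k_i=1$, the parameter $m_{\lambda'}(1)$ grows by one, which is what drives the $+3m_{\lambda}(1)$ contribution. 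A separate but simpler argument treats the case in which $\lambda$ already consists entirely of $1$-parts, which is of course the trivial case and is excluded by hypothesis.

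The main obstacle is precisely the $k_i=2\to 1$ reduction: under this operation the lower-bound target for $\lambda'$ actually \emph{exceeds} that for $\lambda$ in the $3m_{\lambda}(1)+3$ regime, so the naive "delete one independent set and recurse" does not close the induction. To overcome this, I plan to collapse two successive reductions into a single step that removes an independent set of size at least three (matching the $+3$ jump in the budget), rather than applying the one-colour deletion twice in sequence. Producing such an independent set while simultaneously guaranteeing that every remaining vertex still has a valid $\lambda'$-list is the technical heart of the proof and is precisely where the improvement from $5m_{\lambda}(1)$ to $3m_{\lambda}(1)$ originates; I expect this step to rely on a refined matching analysis built on top of the extended Ohba-type machinery of \cite{NRW2015} and \cite{zhuzhu}.
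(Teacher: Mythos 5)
Your upper-bound construction has an arithmetic problem that is fatal as stated. Writing $\lambda=\{1\star s\}\cup\mu$, the extremal graph $H$ for $\mu$ has about $2k_{\mu}+3=2k_{\lambda}-2s+3$ vertices, so attaching one three-vertex gadget per $1$-part yields a graph on $2k_{\lambda}+s+3$ vertices, not $2k_{\lambda}+3s+3$; the identity $|V(H)|+3s=2k_{\lambda}+3s+3$ that you wrote only holds when $s=0$. Worse, if a three-vertex gadget per $1$-part really did suffice, you would have produced a non-$\lambda$-choosable $k_{\lambda}$-chromatic graph on $2k_{\lambda}+s+3$ vertices, which for $s\ge 1$ and $m_{\lambda}({\rm odd})-s\ge 2$ is strictly smaller than the value of $\phi(\lambda)$ asserted by the theorem; so such gadgets cannot exist, and each $1$-part must in fact cost five vertices. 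That is what the paper does: its extremal example is the single complete $k$-partite graph $K_{5\star(a+1),2\star(k-a-1)}$ (for the refinement $\{1\star a,2\star b,3\star c\}$ of $\lambda$), with an explicit bad list assignment built from $6$-element colour blocks for the $3$-parts and $4$-element blocks for the $2$-parts, and the non-colourability is established by a global count over all partite sets rather than gadget by gadget.

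On the lower bound, the step you yourself flag as the technical heart --- extracting an independent set of size at least three to absorb the $k_i=2\to 1$ jump --- is exactly the part you have not supplied, and the paper avoids it entirely by a different reduction: instead of peeling one colour at a time, it peels an entire odd class $C_{a+1}$ with $k_{a+1}\ge 3$ together with a complete $k_{a+1}$-partite subgraph on at most $2k_{a+1}+2$ vertices chosen so that Theorem \ref{thm-noel} guarantees it is $k_{a+1}$-choosable (the ``reducible 4-tuple'' device), and then shows by counting partite-set sizes that such a subgraph must exist. Your plan also has no mechanism for the boundary case $m_{\lambda}(1)=0$, $m_{\lambda}({\rm odd})>0$ on exactly $2k_{\lambda}+2$ vertices: there the graph may be $K_{4,2\star(k-1)}$ or $K_{3\star(k/2+1),1\star(k/2-1)}$, which are \emph{not} $k$-choosable, so no Ohba-type theorem applies; the paper needs the complete characterization of their bad $k$-list assignments (Theorem \ref{unique3}) and a parity argument showing that a bad assignment forces every $k_i$ to be even. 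A generic Hall-type matching argument will not recover this parity obstruction, so as it stands the proposal does not close either half of the theorem.
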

 
If $m_{\lambda}({\rm odd}) = m_{\lambda}(1)$, then it follows from 
Theorem \ref{thm-previous} that $\phi(\lambda)=2k+m_{\lambda}(1)+2$. 
In the following, we assume $m_{\lambda}({\rm odd}) > m_{\lambda}(1)$.

\section{The  $m_{\lambda}(1) =0$ case} 

This section proves that if $m_{\lambda}(1)=0$ and $m_{\lambda}({\rm odd}) > 0$, then $\phi(\lambda)=2k_{\lambda} +3$. Assume that $k_{\lambda}=k$.
By Theorem \ref{thm-previous},  $$2k+2\le \phi(\lambda)\le 2k+3.$$ We need to show that $\phi(\lambda) \ne 2k+2$, i.e.,   any graph $G$ with $\chi(G) \le k$ and $|V(G)| \le   2k+2$ 
is $\lambda$-choosable.  

If $|V(G)| \le 2k+1$, then it follows from Ohba's conjecture (which was confirmed in \cite{NRW2015}) that   $G$ is $k$-choosable, and hence $\lambda$-choosable. Assume $|V(G)|=2k+2$. As $\chi(G) \le k$ and adding edges to a non-$\lambda$-choosable graph results in a non-$\lambda$-choosable graph, it suffices to consider the case that $G$ is a complete $k$-partite graph. 
The following result was proved in \cite{zhuzhubad}.

\begin{theorem}
	\label{thm-noel}
	Assume $G$ is a complete $k$-partite graph with  $|V(G)| = 2k+2$. Then $G$ is $k$-choosable, unless $k$ is even and  $G = K_{4, 2\star (k-1)}$ or $G= K_{3\star (k/2+1), 1 \star (k/2 -1)}$.  
\end{theorem}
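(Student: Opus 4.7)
The plan is to split the argument by the parity of $k$. When $k$ is odd, Theorem~\ref{thm-phik} gives $\phi(k)=2k+3$, so every $k$-colourable graph on at most $2k+2$ vertices is $k$-choosable and no exceptions arise. The substantive case is $k$ even, where two things are needed: (I) the two listed graphs admit $k$-list assignments with no proper colouring, and (II) every other complete $k$-partite $G$ on $2k+2$ vertices is $k$-choosable. For (I), I would exhibit explicit bad $k$-list assignments. For $K_{4,2\star (k-1)}$, the construction amplifies the classical non-$2$-choosable list of $K_{4,2}$: use two ``active'' colours on the size-$4$ part together with one size-$2$ part in the pattern witnessing non-$2$-choosability of $K_{4,2}$, and enlarge every list uniformly by $k-2$ fresh colour pairs assigned to the remaining size-$2$ parts, so that no combination of forced choices on those parts can repair the colouring. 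For $K_{3\star (k/2+1),1\star (k/2-1)}$, I would combine Kierstead's bad list assignment witnessing $\mathrm{ch}(K_{3\star m})=\lceil (4m-1)/3\rceil$ at $m=k/2+1$ (where $\lceil (2k+3)/3\rceil > k/2+1$ for every even $k\ge 2$) with $k/2-1$ pairwise disjoint pairs of fresh colours appended to the singleton parts; the singletons are then forced and the $K_{3\star (k/2+1)}$ sub-problem becomes Kierstead's bad instance.

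For direction (II), I would run a case analysis on the part-size multiset $(n_1,\ldots,n_k)$ and reduce to Ohba's theorem (Noel--Reed--Wu). Since $\sum n_i=2k+2$, the ``excess'' above the constant-$2$ partition is $2$, so $(n_1,\ldots,n_k)$ falls into a short list of families: $(4,2\star (k-1))$, $(3,3,2\star (k-2))$, $(3\star t,1\star (t-2),2\star (k-2t+2))$ for $2\le t\le k/2+1$, $(5,1,2\star (k-2))$, $(4,3,1,2\star (k-3))$, and a few further patterns with a part of size $\ge 5$ or more singletons. Given an arbitrary $k$-list assignment $L$ on such a $G$, the target is to pick a vertex $v$ and a colour $c\in L(v)$ so that, after setting $f(v)=c$ and deleting $c$ from every other list, the residual graph has at most $2k+1$ vertices and carries a valid $(k-1)$-list assignment; the Noel--Reed--Wu theorem then finishes the job. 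Where a single deletion is insufficient, the refinement is to colour a whole small part simultaneously via a system of distinct representatives (Hall's theorem on the bipartite graph between the part's vertices and the colours appearing only in their lists), or to colour two vertices in different parts that share the same list.

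The main obstacle will be the partition family $(3\star t,1\star (t-2),2\star (k-2t+2))$ with $t<k/2+1$, and to a lesser extent $(4,3,1,2\star (k-3))$, since these are structurally closest to the exceptional $K_{3\star (k/2+1),1\star (k/2-1)}$ and the list-size budget on the residual graph becomes tight. I expect to exploit the presence of at least one size-$2$ part (absent in the exception) to produce the ``extra'' slack required for the reduction to Ohba's bound. A secondary subtlety is tracking why ``$k$ even'' matters in the exceptions: the relevant parity obstruction is inherited from the dichotomy in Theorem~\ref{thm-phik}, and must be verified as one moves across the partition families to confirm that no other configuration produces a non-$k$-choosable example when $k$ is even.
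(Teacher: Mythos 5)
First, a point of orientation: the paper does not prove Theorem~\ref{thm-noel} at all --- it is quoted as a result of \cite{zhuzhubad} (with the odd-$k$ half already contained in Theorem~\ref{thm-phik}) --- so there is no in-paper argument to compare against and your proposal must stand on its own. The two-direction plan is sensible and the reduction of the odd-$k$ case to Theorem~\ref{thm-phik} is fine, but both explicit constructions in your direction (I) fail as described. For $K_{3\star(k/2+1),1\star(k/2-1)}$: Kierstead's bad instance for $K_{3\star m}$ uses lists of size $\lceil(4m-1)/3\rceil-1\approx 2k/3$ at $m=k/2+1$, well below $k$; your inequality $\lceil(2k+3)/3\rceil>k/2+1$ compares the choice number with the chromatic number of the subgraph rather than with $k$, and since $\lceil(2k+3)/3\rceil\le k$ for $k\ge 3$, the graph $K_{3\star(k/2+1)}$ on its own is $k$-choosable and Kierstead's instance is not a bad $k$-list assignment. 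Moreover, appending pairwise disjoint fresh pairs to the singleton lists makes those vertices \emph{less} constrained, not ``forced'': each singleton can take a fresh colour and then imposes nothing on the triples. The construction that works (and is what Theorem~\ref{unique3}(1) reverse-engineers) is to take $|C|=3k/2$ colours and, for each triple, give its three vertices the three pairwise unions of a partition of $C$ into three $k/2$-blocks, so each colour lies in exactly two of the three lists; then each triple needs at least two colours and each singleton one more, for $2(k/2+1)+(k/2-1)=3k/2+1>|C|$ in total. Similarly, for $K_{4,2\star(k-1)}$, ``enlarging every list by $k-2$ fresh colour pairs'' yields lists of size $2k-2\ne k$, and disjoint fresh pairs on the other size-$2$ parts do not interact with the $K_{2,4}$ core at all; the correct amplification replaces each of the four colours of the classical $K_{2,4}$ assignment by a block of size $k/2$ (the $A_3=A_4=\emptyset$ instance of Theorem~\ref{unique3}(2)). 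Note that this is where ``$k$ even'' genuinely enters --- it is not ``inherited'' from Theorem~\ref{thm-phik}.

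Direction (II) is the real content of the theorem and your proposal leaves it essentially untouched. The part-size multisets with $\sum_i(|P_i|-2)=2$ form an infinite collection once singletons are allowed (any number of size-$1$ parts can be compensated by more or larger big parts), and ``colour one vertex or one part and reduce to Noel--Reed--Wu'' has to be executed uniformly across all of them; the family $(3\star t,1\star(t-2),2\star(k-2t+2))$ that you flag as the obstacle is precisely where the counting is tight and where the published proof does its heavy lifting. As it stands, this part is a program rather than a proof.
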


Thus we may assume that $k$ is even and $G = K_{4, 2\star (k-1)}$ or $G= K_{3\star (k/2+1), 1 \star (k/2 -1)}$.  
  We say a $k$-list assignment $L$ of $G$   is    {\em bad } if  $G$ is not $L$-colourable. It remains to show that any $\lambda$-list assignment of $G$ is not bad.

  All bad assignments for  $ K_{4, 2\star(k-1)}$ and $K_{3 \star (k/2+1), 1 \star (k/2-1)}$ are characterized in \cite{EOOS2002} and \cite {zhuzhubad}, respectively.
\begin{theorem}
	\label{unique3}
	Assume $G=K_{4, 2\star(k-1)}$ or  $G=K_{3 \star (k/2+1), 1 \star (k/2-1)}$, and  $L$ is a bad $k$-list assignment of $G$. Let $C= \bigcup_{v \in V(G)}L(v)$. 
	\begin{enumerate}
		\item If $G = K_{3 \star (k/2+1), 1 \star (k/2-1)}$ and the partite sets of $G$ are  $P_i=\{v_{i,1}, v_{i,2}, v_{i,3}\}$ for $1\le i\le k/2+1$ and  $P_i=\{ v_{i,1}\}$ for $k/2+2\le i\le k$,, then  $|C|=3k/2$ and for each partite set $P_i$ where $1\le i\le k/2+1$ of $K_{3\star(k/2+1), 1\star(k/2-1)}$, each colour in $C$ is contained in two of the lists $L(v_{i,1}),L(v_{i,2}),L(v_{i,3})$.
		\item  If     $G=K_{4, 2\star(k-1)}$, and the partite sets of $G$ are  $P_1=\{u_1, v_1, x_1, y_1\}$  and  $P_i=\{u_i, v_i\}$ for $2\le i\le k$,  then  
		\begin{itemize}
			\item $C$ can be partitioned into $A$ and $B$ with $|A|=|B|=k$, where $A$ can be further partitioned into $A_1,A_2,A_3,A_4$ such that $|A_1|=|A_2|$, $|A_3|=|A_4|$,  and  $B$   can be further partitioned into $B_1,B_2$ with $|B_1|=|B_2|$.
			\item $L(u_1)=A_1\cup A_3\cup  B_1$, $L(v_1)=A_1\cup A_4\cup  B_2$, $L(x_1)=A_2\cup  A_4\cup B_1$, $L(y_1)=A_2\cup A_3\cup  B_2$.
			\item $L(u_i) =A$, $L(v_i)= B$ for $2\le i  \le k$.
		\end{itemize}	
	\end{enumerate}   
\end{theorem}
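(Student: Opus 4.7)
The strategy is to start from the Hall-type colouring criterion for complete multipartite graphs and use the extremal condition $|V(G)|=2k+2$ (where Ohba's bound is tight) to force tight combinatorial rigidity on any bad $L$. Recall that an $L$-colouring of a complete multipartite graph $G$ amounts to choosing, for each partite set $P_i$, a subset $S_i\subseteq \bigcup_{v\in P_i}L(v)$ together with an assignment $v\mapsto f(v)\in L(v)\cap S_i$ of colours inside $P_i$, such that the $S_i$ are pairwise disjoint across different $i$. A bad $L$ is one for which no such system exists. The first step is to bound $|C|=|\bigcup_v L(v)|$ from above by a double counting: since $\sum_v|L(v)|=k(2k+2)$ is large while failure of $L$-colourability forces many shared colours across different lists, I would derive $|C|\le 3k/2$ in case 1 and $|C|\le 2k$ in case 2. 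Both bounds turn out to be equalities, and this saturation is the engine that drives the rest of the argument.

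For $G=K_{3\star(k/2+1),1\star(k/2-1)}$, the next step is to show that within each triple $P_i=\{v_{i,1},v_{i,2},v_{i,3}\}$ (with $i\le k/2+1$), every colour of $C$ appears in at least two of the three lists; otherwise a colour present in only one of these lists could be assigned there, and the remaining two lists in $P_i$ together with the singleton partite sets would have enough free colours to complete a colouring by a direct greedy/matching argument. Combined with $|C|=3k/2$ and $\sum_{j=1}^{3}|L(v_{i,j})|=3k$, this forces exactly two appearances per colour per triple. For $G=K_{4,2\star(k-1)}$, the bad-assignment condition on each pair $P_i=\{u_i,v_i\}$ ($i\ge 2$) first forces $L(u_i)$ and $L(v_i)$ to partition $C$ into the same two sets $A,B$ of size $k$, independent of $i$; otherwise a matching between the lists at $P_i$ and $P_j$ would yield a colouring. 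Writing each of $L(u_1),L(v_1),L(x_1),L(y_1)$ as the union of its traces on $A$ and $B$, I would analyze which colours appear in which subsets of $P_1$ to recover the partition $A=A_1\cup A_2\cup A_3\cup A_4$ and $B=B_1\cup B_2$.

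The main obstacle is the rigidity step for $K_{4,2\star(k-1)}$: proving that the four lists on $P_1$ cannot deviate at all from the stated symmetric arrangement. The hard part is a case analysis on the sizes $|L(w)\cap A|$ and $|L(w)\cap B|$ for $w\in P_1$, together with how colours are shared between different $w$; for each putative alternative configuration I would need to construct an $L$-colouring by a Hall-type matching on a bipartite graph between $P_1$ and the colour pool (after fixing $S_i$ for $i\ge 2$ to be one of $A$ or $B$). The combinatorial book-keeping is substantial because four vertices interact with six colour blocks $A_1,A_2,A_3,A_4,B_1,B_2$ in many ways, and one must verify that precisely one arrangement leaves no system of distinct representatives. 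A convenient auxiliary trick is to observe the symmetry swapping $(u_1,y_1)\leftrightarrow (v_1,x_1)$ and $(u_1,v_1)\leftrightarrow(x_1,y_1)$, which cuts the number of cases and makes the equal-sizes conditions $|A_1|=|A_2|$, $|A_3|=|A_4|$, $|B_1|=|B_2|$ appear naturally from double-counting pairs of agreements.
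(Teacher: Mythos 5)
You should first be aware that the paper does not prove Theorem \ref{unique3} at all: part (2) is imported from Enomoto, Ohba, Ota and Sakamoto \cite{EOOS2002} and part (1) from the authors' companion paper \cite{zhuzhubad}, each of which devotes substantial work to the corresponding characterization. So there is no in-paper argument to compare against, and your attempt must stand on its own. As it stands, it is a plan rather than a proof: every decisive step is stated in the conditional (``I would derive'', ``I would need to construct''), and the content deferred is exactly the content that makes the theorem true.

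Concretely: (i) the equalities $|C|=3k/2$ and $|C|=2k$ do not follow from the double count $\sum_v|L(v)|=k(2k+2)$ together with the bare fact that $L$ is bad; showing that a bad assignment must have few colours (and, in case 2, also at least $2k$ colours, a direction you never address but which is needed for $|A|=|B|=k$) is itself a nontrivial Hall/Kierstead-type argument and is where much of the work in \cite{EOOS2002} lies. (ii) In case 1, your justification that a colour lying in only one of the three lists of a triple yields a colouring (``the remaining two lists \dots would have enough free colours to complete a colouring by a direct greedy/matching argument'') is not valid as stated: after using that colour on one vertex you must still properly colour the other two vertices of that triple and all remaining partite sets from their restricted lists, and no greedy argument guarantees this. (iii) In case 2, the claims that $L(u_i)\cap L(v_i)=\emptyset$ for $i\ge 2$ and that the resulting bipartition $\{A,B\}$ of $C$ is the \emph{same} for every $i$ are the crux of the rigidity; ``otherwise a matching between the lists at $P_i$ and $P_j$ would yield a colouring'' does not explain how such a matching extends to the other $2k-2$ vertices. (iv) The case analysis over the traces of $L(u_1),L(v_1),L(x_1),L(y_1)$ on $A$ and $B$, which you yourself identify as the main obstacle, is not carried out, and the equalities $|A_1|=|A_2|$, $|A_3|=|A_4|$, $|B_1|=|B_2|$ are asserted to ``appear naturally'' rather than derived. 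The overall strategy (exploit tightness at $|V(G)|=2k+2$ and force rigidity of the colour classes) is the right family of ideas for such characterizations, but none of the steps that would distinguish the correct characterization from an incorrect one are actually present.
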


Assume that
 $\lambda = \{k_1,k_2, \ldots, k_q\}$.
  Assume to the contrary that a $\lambda$-list assignment $L$ of $G$ is bad. 
  Let $C=\bigcup_{v\in V(G)}L(v)$. By definition,  $C$ can be  partitioned into the disjoint union $C_1 \cup C_2 \cup \ldots \cup C_q$ of $q$ sets, and for each $i$ and each vertex $v$ of $G$, $|L(v) \cap C_i| = k_i$.  
  
If $G=K_{3\star(k/2+1), 1\star(k/2-1)}$, then by Theorem \ref{unique3}, each color $c\in C$ is contained in two of the lists $L(v_{1,1}),L(v_{1,2}),L(v_{1,3})$, where $P=\{v_{1,1},v_{1,2},v_{1,3}\}$ is a partite set of $G$ of size $3$. 
This implies that $|C_i|=3k_i/2$, and hence $k_i$ is even for all $i$, contrary to the assumption that $m_{\lambda}({\rm odd}) > 0$.

Assume $G=K_{4,2\star(k-1)}$. Let $A_1,A_2,A_3,A_4, B_1,B_2$ and $A,B$ be the sets of colours as in Theorem \ref{unique3}.  By the definition of $\lambda$-list assignment, we know that $|A\cap C_i|=|L(u_j) \cap C_i| = |L(v_j) \cap C_i| = |B\cap C_i|=k_i$ for each $1\le i\le q$.    Assume that $$|A_1\cap C_i|=a_{1i}, \ |A_2\cap C_i|=a_{2i}, \ |A_3\cap C_i|=a_{3i}, \ |A_4\cap C_i|=a_{4i},$$ for $1 \le i\le q$, and 
$$|B_1\cap C_i|=b_{1i}, \  |B_2\cap C_i|=b_{2i}.$$ By Theorem \ref{unique3},  

\begin{equation*}
\begin{cases}
a_{1i}+a_{3i}+b_{1i}=k_i,\\
a_{1i}+a_{4i}+b_{2i}=k_i,\\
a_{2i}+a_{4i}+b_{1i}=k_i,\\
a_{2i}+a_{3i}+b_{2i}=k_i,\\
\end{cases}
\end{equation*}

It follows that $a_{1i}+a_{3i}=a_{2i}+a_{4i}$ and $a_{1i}+a_{4i}=a_{2i}+a_{3i}$. This implies that
$$a_{1i}+a_{2i}+a_{3i}+a_{4i}=|A\cap C_i|=k_i\equiv 0({\rm mod} 2),$$
  contrary to the assumption that $m_{\lambda}({\rm odd}) > 0$.
 This completes the proof for the case $m_{\lambda}(1)=0$.

\section{The  $m_{\lambda}(1)\ge 1$ case} 
In the remainder of the paper, 
assume that $k_{\lambda}=k$, $  m_{\lambda}(1)= a \ge 1$ and $m_{\lambda}({\rm odd}) -m_{\lambda}(1) = c  \ge  1$.  We need to show that 
$\phi(\lambda)=
\min\{2k  + a+c +2, 2k+3a+3 \}$. First, we   prove the upper bound, i.e., $$\phi(\lambda) \le
\min\{2k  + a+c +2, 2k +3a+3 \}.$$
By Theorem \ref{thm-previous}, $\phi(\lambda) \le
2k +a+c+2$. It remains to show that $\phi(\lambda) \le
2k +3a+3$.  Observe that $k_{\lambda}=k$, $  m_{\lambda}(1)= a$ and $m_{\lambda}({\rm odd}) =a+c$ implies that $\{1 \star a,2 \star (k-a-3c)/2,3 \star c\}$  is a refinement of $\lambda$. Hence it suffices to prove the following lemma.

\begin{lemma}
	\label{upper lemma 1}
	Assume $\lambda=\{1 \star a,2 \star b,3 \star c\}$ and $k=a+2b+3c$ (and hence $m_{\lambda}(1)=a$, $m_{\lambda}({\rm odd})=a+c$ and $k_{\lambda}=k$). Then there exists a $k$-chromatic graph $G$ with $|V(G)|=2k+3a+3$ which is not $\lambda$-choosable.
\end{lemma}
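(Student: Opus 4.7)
The plan is to produce an explicit $k$-chromatic complete multipartite graph $G$ on $2k+3a+3$ vertices together with a bad $\lambda$-list assignment $L$, generalizing the extremal witnesses for the $a=0$ case. For the graph I would take
\[G = K_{4a+5,\,2\star(k-a-1),\,1\star a},\]
with $(4a+5)+2(k-a-1)+a=2k+3a+3$ vertices and exactly $k$ parts (non-empty since $c\ge 1$ forces $k-a-1\ge 2$), so $\chi(G)=k$. It reduces to $K_{5,\,2\star(k-1)}$ at $a=0$, which already witnesses $\phi(\{2\star b,3\star c\})\le 2k+3$. Label the big part $P_0=\{u_1,\dots,u_{4a+5}\}$, the size-$2$ parts $Q_1,\dots,Q_{k-a-1}$ and the singleton parts $\{w_1\},\dots,\{w_a\}$.

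For the list assignment, let $C_1=\{\alpha_1\},\dots,C_a=\{\alpha_a\}$ be the singleton classes of $\lambda$ and mirror the ``$A\cup B$ with quadrant sub-partition'' structure of Theorem~\ref{unique3}(2). Concretely, take each non-singleton class $C_{a+j}$ of size $2k_{a+j}$ (double the $\lambda$-minimum), split each into an ``$A$-half'' and a ``$B$-half'' of equal size $k_{a+j}$, and let $A,B$ be the unions. On each $Q_j=\{x_j,y_j\}$ set $L(x_j)=A\cup\{\alpha_1,\dots,\alpha_a\}$ and $L(y_j)=B\cup\{\alpha_1,\dots,\alpha_a\}$; on each $w_i$ set $L(w_i)$ to be $\{\alpha_i\}$ together with a balanced selection of $k-1$ colours from $A\cup B$; and on the $4a+5$ vertices of $P_0$ design lists in a sub-quadrant pattern generalizing Theorem~\ref{unique3}(2), each satisfying $|L(v)\cap C_i|\ge k_i$.

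To show $G$ is not $L$-colourable I would case-split on which $a$ parts of $G$ absorb the singleton colours $\alpha_1,\dots,\alpha_a$ in a hypothetical proper colouring. Because the lists on the $Q_j$'s and on $P_0$ are symmetric with respect to the $\alpha_i$'s, each case reduces to a $\{2\star b,3\star c\}$-type problem on the $K_{4a+5,\,2\star(k-a-1)}$-substructure, and the sub-quadrant design on $P_0$ produces a linear system on the sub-intersection sizes $|A_t\cap C_{a+j}|,|B_s\cap C_{a+j}|$ exactly analogous to the one at the end of Section~2. Summing its four-equation blocks yields $|A\cap C_{a+j}|\equiv 0\pmod 2$ for every non-singleton class $C_{a+j}$; but by construction $|A\cap C_{a+b+1}|=k_{a+b+1}=3$, which is odd since $c\ge 1$, a contradiction.

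The main obstacle is the uniform design of the lists on the $4a+5$ vertices of $P_0$: they must be chosen so that the same parity equations appear regardless of which $a$ parts absorb the singletons, while each individual list still satisfies the $\lambda$-structural constraint $|L(v)\cap C_i|\ge k_i$. This is essentially a Latin-square-style combinatorial construction; once it is set up correctly, the parity contradiction inherited from Section~2 transfers directly and completes the lemma.
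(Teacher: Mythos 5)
There is a genuine gap, and in fact the construction as specified cannot work. You take the singleton classes to be literal single colours $C_i=\{\alpha_i\}$, and the constraint $|L(v)\cap C_i|\ge 1$ then forces $\alpha_1$ to lie in the list of \emph{every} vertex of the big part $P_0$. So one can colour all $4a+5$ vertices of $P_0$ with $\alpha_1$, delete $P_0$ and remove $\alpha_1$ from the remaining lists; what is left is $K_{2\star(k-a-1),1\star a}$, a $(k-1)$-partite graph on $2k-a-2\le 2(k-1)+1$ vertices with lists of size at least $k-1$, which is $(k-1)$-choosable by the Noel--Reed--Wu theorem. Hence any list assignment of your graph with this singleton-class structure is colourable, no matter how cleverly you design the lists on $P_0$. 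The paper's construction avoids exactly this trap by using $a+1$ parts of size $5$ (not one huge part plus $a$ singletons): every vertex still sees all $a$ colours of $E$, but $a$ colours cannot monochromatically absorb $a+1$ big parts, and that pigeonhole is precisely what drives the final contradiction.

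Your proposed mechanism for non-colourability is also pointed in the wrong direction. In Section 2 the quadrant linear system is derived from the \emph{structure of the list assignment} (the intersection sizes $|L(v)\cap C_i|=k_i$ combined with Theorem \ref{unique3}), not from the existence of a colouring; its conclusion is that a quadrant-structured bad assignment cannot be a $\lambda$-list assignment when $\lambda$ has an odd part. Theorem \ref{unique3} says ``bad $\Rightarrow$ quadrant-structured''; it does not say ``quadrant-structured $\Rightarrow$ bad'', so mirroring that structure gives you no badness, and the parity identity $|A\cap C_{a+j}|\equiv 0 \pmod 2$ you would derive contradicts your own design rather than the assumed proper colouring. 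The paper instead proves non-colourability by a direct counting argument: each partite set either uses a single colour of $E$ or at least two colours, the total palette has only $2k-a$ colours, so every non-$E$-coloured part uses exactly two colours; a structural claim then confines the two colours of any size-$5$ part to $A_6\cup B_6$, and $|\varphi(X_1)|+|\varphi(\bigcup_i U_i)|\ge k+1$ exceeds $|E\cup A_6\cup B_6|=k$. Finally, note that the actual list design on the big parts (triples from $6$-sets for the odd classes, pairs from $4$-sets for the even ones) is the heart of the lemma, and your proposal leaves it as an acknowledged open ``obstacle'', so the argument is incomplete even setting aside the two issues above.
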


\begin{proof}
	Let $G=K_{5\star (a+1),2\star(k-a-1)}$ be the complete $k$-partite graph with $a+1$ partite sets of size 5, and $k-a-1$ partite sets of size 2. 
	\begin{itemize}
		\item The $a+1$ partite sets of size 5 are $U_i=\{u_{i,1}, u_{i,2}, u_{i,3}, u_{i,4}, u_{i,5}\}$ for $i=1, 2, \ldots, a+1$.
		\item The $k-a-1$ partite sets of size 2 are $V_j=\{v_{j,1}, v_{j,2}\}$ for $j=1, 2, \ldots, k-a-1$.
	\end{itemize} 
	Then  $G$ is a $k$-chromatic graph with $|V(G)|=2k+3a+3$.
	We will show that $G$ is not $\lambda$-choosable.

	\begin{itemize}
		\item  For $i=1,2,\ldots, c$, let $S_i=\{s_{i,1}, s_{i,2}, \ldots, s_{i,6}\}$ be pairwise disjoint sets of size $6$.
		\item  For $i=1,2,\ldots, b$,  let $T_{i}=\{t_{i,1}, t_{i,2}, t_{i,3}, t_{i,4}\}$ be pairwise disjoint sets of size $4$.
		\item 	Let $E$ be a set of $a$ colours, and the sets $E, S_i, T_{i}$ are pairwise disjoint.
	\end{itemize}
	
  Let
	\begin{eqnarray*}
&&	A_1=\bigcup_{i=1}^{c} \{s_{i,1}, s_{i,3}, s_{i,5}\} , \ A_2= \bigcup_{i=1}^{c} \{s_{i,1}, s_{i,3}, s_{i,6}\}, \ A_3= \bigcup_{i=1}^{c} \{s_{i,1}, s_{i,2}, s_{i,4}\} , \ A_4= \bigcup_{i=1}^{c} \{s_{i,2}, s_{i,3}, s_{i,4}\}, \\
&&	A_5= \bigcup_{i=1}^{c} \{s_{i,2}, s_{i,5}, s_{i,6}\} ,
	\ A_6= \bigcup_{i=1}^{c} \{s_{i,1}, s_{i,2}, s_{i,3}\} , \ A_7= \bigcup_{i=1}^{c} \{s_{i,4}, s_{i,5}, s_{i,6}\},\\
&&	B_1=\bigcup_{i=1}^{b} \{t_{i,2}, t_{i,3} \} , \ B_2= \bigcup_{i=1}^{b} \{t_{i,2}, t_{i,4}\}, \ B_3= \bigcup_{i=1}^{b} \{t_{i,1}, t_{i,2}\} , \ B_4= \bigcup_{i=1}^{b} \{t_{i,1}, t_{i,3}\}, \\
&&	B_5=\bigcup_{i=1}^{b} \{t_{i,1}, t_{i,4} \} , \ B_6= \bigcup_{i=1}^{b} \{t_{i,1}, t_{i,2}\}, \ B_7= \bigcup_{i=1}^{b} \{t_{i,3}, t_{i,4}\}.
	\end{eqnarray*}

	Let $L$ be the $\lambda$-list assignment of $G$ defined as follows:
	\[
	L(v)=\begin{cases} A_j\cup B_j\cup E, &\text{ if $v = u_{i,j}, 1\le i \le a+1, 1 \le j \le 5$}, \cr 
	A_{j+5} \cup B_{j+5} \cup E, &\text{ if $v = v_{i,j}, 1\le i \le k-a-1, 1 \le j \le 2$}, \cr 
	\end{cases}
	\]

	The colour set $C= \bigcup_{v \in V(G)}L(v)$ is partitioned into subsets $$\bigcup_{c \in E} \{c\} \cup \bigcup_{i=1}^bT_i \cup \bigcup_{i=1}^c S_i.$$
	For each vertex $v$ of $G$,  
	\begin{itemize}
		\item $|L(v) \cap \{c\}|=1$ for   $c \in E$,
		\item $|L(v) \cap T_i| =2$ for   $i=1,2,\ldots, b$,
		\item $|L(v) \cap S_i| =3$ for   $i=1,2,\ldots, c$.
	\end{itemize}
	So $L$ is a $\lambda$-list assignment.
	
	Now we show that $G$ is not $L$-colourable.
	Assume to the contrary that   $G$ has a proper $L$-colouring $\varphi$.

	For $j=1,2$, let $X_j = \{v_{1,j}, v_{2,j}, \ldots, v_{k -a-1, j}\}$. Each of $X_1, X_2$ induces a complete graph of order $k -a-1$. Hence
	$|\varphi(X_j)| = k -a-1$. Note that
	$$\varphi(X_1) \subseteq   A_6\cup B_6 \cup E, \ \varphi(X_2) \subseteq    A_7 \cup B_7 \cup E.$$
	
	For each partite set $P$ of $G$, $\bigcap_{v \in P} L(v)=E$. Hence either $\varphi(P)=\{c\}$ for some colour $c \in E$, or $|\varphi(P)| \ge 2$. Therefore 
	$$|\varphi(G)|\ge a+2(k-a)=2k-a.$$
	
	As $|\bigcup_{v \in V(G)}L(v)|=a+4b+6c=2k-a$, this implies that
	for each partite set $P$ of $G$,  
	
	\begin{equation*}
	|\varphi(P)|= 
	\begin{cases}
	1&\mbox{if $\varphi(P)=c\in E$ for some $c\in E$,}\\
	2&\mbox{otherwise.}
	\end{cases}
	\end{equation*}
	Note that if $|\varphi(P)|=2$, then $\varphi(P) \cap E = \emptyset$.

	\begin{claim}
		\label{clm-1a}
		If $U_i$ is a partite set of size $5$ and $|\varphi(U_i)|=2$, then $\varphi(U_i)\subseteq  A_6\cup B_6$.
	\end{claim}
	\begin{proof}
		Assume $1\le i\le a+1$ and $|\varphi(U_i)|=2$, say 
		$\varphi(U_i) = \{c_1,c_2\}$. Then for any $1 \le j \le 5$, $L(u_{i,j}) \cap \{c_1,c_2\} \ne \emptyset$. 
		
Assume first that $\{c_1,c_2\} \subseteq \bigcup_{i=1}^7B_i$. If $  t_{i,1}   \not\in \{c_1,c_2\}$ for any $1 \le i \le b$, then $L(u_{i,j}) \cap \{c_1,c_2\} = \emptyset$ for some $j \in \{3,4,5\}$, a contradiction.  If $  t_{i,2}   \not\in \{c_1,c_2\}$ for any $1 \le i \le b$, then $L(u_{i,j}) \cap \{c_1,c_2\} = \emptyset$ for some $j \in \{1,2,3\}$, a contradiction.
Therefore $\{c_1,c_2\}$ contains $  t_{i,1}  $ for some $1 \le i \le b$ and $  t_{i',2}  $ for some $1 \le i' \le b$.   So $\{c_1,c_2\} \subseteq B_6$.

		Assume that $\{c_1,c_2\}\nsubseteq \bigcup_{i=1}^7B_i$, say $c_1 \in \bigcup_{i=1}^7A_i$.  Let $J(c_1)=\{j: c_1 \in  L(u_{i,j})\}$. Then 
		$c_2 \in \bigcap_{j \in [5] - J(c_1)}L(u_{i,j})-E$. 
		
		If $c_1=s_{j,4}$ for some $j$, then $J(c_1)=\{3,4\}$ and 
		$L(u_{i,1}) \cap L(u_{i,2})\cap L(u_{i,5}) - E =\emptyset$, a contradiction.
		
			If $c_1=s_{j,5}$ for some $j$, then $J(c_1)=\{1,5\}$ and 
			$L(u_{i,2}) \cap L(u_{i,3})\cap L(u_{i,4}) - E =\emptyset$, a contradiction.
		
			If $c_1=s_{j,6}$ for some $j$, then $J(c_1)=\{2,5\}$ and 
			$L(u_{i,1}) \cap L(u_{i,3})\cap L(u_{i,4}) - E =\emptyset$, a contradiction.
		
		So $c_1 \ne s_{j,4}, s_{j,5}, s_{j,6}$ for   $1 \le j \le c$.

		If $c_1=s_{j,1}$ for some $j$, then $J(c_1)=\{1,2,3\}$,  and
		$L(u_{i,4}) \cap L(u_{i,5})  - E =\bigcup_{i=1}^c\{s_{i,2}\}\cup \bigcup_{i=1}^b\{t_{i,1}\} $. Hence    $\{c_1,c_2\} \subseteq  \bigcup_{j=1}^c\{s_{j,1}, s_{j,2}\}\cup \bigcup_{i=1}^b\{t_{i,1}\}$.

		If $c_1=s_{j,2}$ for some $j$, then $J(c_1)=\{3,4,5\}$, and
		$L(u_{i,1}) \cap L(u_{i,2}) - E =\bigcup_{i=1}^c\{s_{i,1}, s_{i,3}\}\cup \bigcup_{i=1}^b\{t_{i,2}\}$. Hence  $\{c_1,c_2\} \subseteq  \bigcup_{j=1}^c\{s_{j,1}, s_{j,2}, s_{j,3}\}\cup \bigcup_{i=1}^b\{t_{i,2}\}$.

		If $c_1=s_{j,3}$ for some $j$, then $J(c_1)=\{1,2,4\}$, and
		$L(u_{i,3}) \cap L(u_{i,5}) - E =\bigcup_{i=1}^c\{s_{i,2}\}\cup \bigcup_{i=1}^b\{t_{i,1}\}$. Hence  $\{c_1,c_2\} \subseteq  \bigcup_{j=1}^c\{  s_{j,2}, s_{j,3}\}\cup \bigcup_{i=1}^b\{t_{i,1}\}$. 
		
		In any case, $\varphi(U_i)=\{c_1,c_2\}\subseteq \bigcup_{j=1}^c\{s_{j,1},s_{j,2},s_{j,3}\}\cup  \bigcup_{i=1}^b\{t_{i,1}, t_{i,2}\} = A_6\cup B_6$.
		This completes the proof of the claim.
	\end{proof}

	It follows from Claim \ref{clm-1a} that  $$\varphi(\bigcup_{i=1}^{a+1}U_i)\subseteq E\cup A_6\cup B_6.$$
	Hence $$\varphi(X_1) \cup \varphi(\bigcup_{i=1}^{a+1}U_i)\subseteq E\cup A_6\cup B_6.$$
 As $|E|=a$, $|\varphi(U_i)|=2$ for some $i$. This implies that $$k+1 \le |\varphi(X_1)|+| \varphi(\bigcup_{i=1}^{a+1}U_i)| \le |E \cup A_6 \cup B_6| =k, $$
   a contradiction. 
\end{proof}



Next we prove the lower bound that $\phi(\lambda ) \geqslant \min\{2k+3a+3, 2k+a+c+2\}$.
 
Assume to the contrary that  $\phi(\lambda) < \min\{2k   +a+c +2, 2k +3a+3 \}$ for some $\lambda$.
We choose such a multi-set  $\lambda=\{k_1,k_2,\ldots, k_q\}$  with $|\lambda|=q$   minimum. Assume that  $k_1=k_2=\ldots=k_a=1$ and $3\le k_{a+1}\le k_{a+2}\le \ldots \le k_{a+c}$ are the odd integers in $\lambda$.

Let  $n=\min\{2k  +a+ c +2, 2k  + 3a+3 \}$.  Then there is a $k $-chromatic graph  $G$ with 
$|V(G)| \le n-1$ which is not $\lambda$-choosable.
We may assume that $G$ is a complete $k $-partite graph with $|V(G)|=n-1$ and with partite sets $P_1,P_2,\ldots, P_{k }$ such that $|P_1| \ge |P_2| \ge \ldots \ge |P_{k }|$. 
 For a positive integer $i$, let 
$$I_i=\{j: |P_j|=i\}.$$
Note that $|P_1| \ge 3$   (as $|V(G)| > 2k$).
If $c\le 2a+1$, then $n=2k+a+c+2$ and if $c\ge 2a+2$, then $n=2k+3a+3$.

\begin{claim}
	\label{clm-p_1} 
	$|P_1| \le 4$, and if $c\le 2a+1$, then $|P_1| \le   c-2a+3 $.
\end{claim}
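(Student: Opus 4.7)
The plan is to argue by contradiction, exploiting the minimality of $q = |\lambda|$ in the choice of counterexample. Suppose first that $|P_1| \geq 5$. Fix a bad $\lambda$-list assignment $L$ of $G$ with colour partition $\bigcup_v L(v) = C_1 \sqcup \cdots \sqcup C_q$; by deleting redundant colours we may assume $|L(v)\cap C_i|=k_i$ for every $v$ and $i$. Because $a \geq 1$, fix an index $i_0$ with $k_{i_0}=1$; this singleton colour class will drive the reduction.

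The central step is to colour all of $P_1$ with a single common colour $\gamma \in C_{i_0}$ with $\gamma \in L(v)$ for every $v \in P_1$, and then to reduce to the complete $(k-1)$-partite graph $G' = G - P_1$ with list assignment $L'(v) = L(v) \setminus \{\gamma\}$. Here $L'$ is a $\lambda'$-list assignment where $\lambda' = \lambda \setminus \{k_{i_0}\}$ has $|\lambda'| = q-1$, $k_{\lambda'} = k-1$, $m_{\lambda'}(1) = a-1$, and $m_{\lambda'}(\mathrm{odd}) - m_{\lambda'}(1) = c$. By the minimality of $q$---or the result of Section~2 in the boundary case $a=1$, where $m_{\lambda'}(1)=0$---we have $\phi(\lambda') \geq \min\{2(k-1)+(a-1)+c+2,\ 2(k-1)+3(a-1)+3\}$, and a direct arithmetic check confirms that $|V(G')| = n-1-|P_1| \leq n-6 < \phi(\lambda')$ in both sub-cases $c\leq 2a+1$ and $c\geq 2a+2$. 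Hence $G'$ is $\lambda'$-choosable, and any proper $L'$-colouring of $G'$ together with the constant colour $\gamma$ on $P_1$ yields a proper $L$-colouring of $G$, contradicting the badness of $L$.

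The main obstacle is the case where no single colour $\gamma \in C_{i_0}$ lies in the lists of every vertex of $P_1$, for any singleton index $i_0$. The plan then is to cover $P_1$ by a small number of carefully chosen singleton colours $\gamma_1, \ldots, \gamma_s$ drawn from (possibly distinct) singleton classes, partitioning $P_1$ into groups so that each group is monochromatically coloured by some $\gamma_j$, and reducing to $\lambda' = \lambda \setminus \{1 \star s\}$ while verifying $|V(G')| < \phi(\lambda')$ again. When $a$ is too small to supply enough singleton classes (particularly $a=1$), we instead borrow a single colour from a non-singleton class $C_{i_0}$ with $k_{i_0}\geq 2$: this replaces $k_{i_0}$ by $k_{i_0}-1$ in $\lambda$ and perturbs the $(a,c)$ parameters accordingly, and tracking these perturbations through the arithmetic is the most delicate part of the argument.

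For the second part of the claim, under the stronger hypothesis $c \leq 2a+1$ and $|P_1| \geq c-2a+4$, I apply exactly the same reduction scheme but in the sharper size regime $n = 2k+a+c+2$. The stronger lower bound on $|P_1|$ supplies just enough slack in the key inequality $|V(G')| < \phi(\lambda')$ for the induction to close in each sub-case; in the boundary case $c = 2a+1$ this recovers $|P_1| \leq 4$ as in the first part.
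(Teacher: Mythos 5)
Your overall reduction scheme (delete $P_1$, pass to $\lambda'=\lambda-\{1\}$, invoke minimality of $|\lambda|$, check the arithmetic) is the paper's approach, and your arithmetic for the single-class reduction is essentially right. But there is a genuine gap: you manufacture an obstacle that does not exist, and your workaround for it fails. You insist on colouring all of $P_1$ with a \emph{single} common colour $\gamma\in C_{i_0}$, and then devote the bulk of the argument to the case where no such common colour exists. This is unnecessary: $P_1$ is an independent set, so its vertices need not receive the same colour. The correct move is to set $L'(v)=L(v)-C_1$ for $v\in V(G')$ (removing the \emph{entire} singleton class $C_1$ from the remaining lists, not just one colour), obtain a proper $L'$-colouring of $G'$, and then colour each $v\in P_1$ with an arbitrary colour from $L(v)\cap C_1$, which is nonempty since $|L(v)\cap C_1|\ge k_1=1$. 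No conflict arises inside $P_1$ (it is independent) nor with $G'$ (the colours of $C_1$ have been purged from all lists of $G'$). This is exactly how the paper closes the argument, and it makes your ``main obstacle'' vanish.

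Your proposed fallbacks for the phantom obstacle are moreover unsound. Covering $P_1$ by $s\ge 2$ singleton colours from distinct classes would pass to $\lambda'=\lambda-\{1\star s\}$, for which the target bound drops by $\min\{3s,5s\}$ while $|V(G')|$ drops only by $|P_1|+ (s-1)\cdot 0$ relative to the $s=1$ computation; already for $s=2$ and $|P_1|=5$ the inequality $|V(G')|<\min\{2k'+a'+c'+2,\,2k'+3a'+3\}$ can fail. And ``borrowing a colour from a non-singleton class $C_{i_0}$'' replaces $k_{i_0}$ by $k_{i_0}-1$ without decreasing $|\lambda|$, so the minimality of $q$ cannot be invoked for the resulting $\lambda'$ (and the parity of $k_{i_0}$, hence the parameter $c$, is perturbed in a direction you do not control). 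Both escape routes should be deleted in favour of the independent-set observation above.
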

\begin{proof}		
	Let   $G'=G-P_1$ and $L'(v)=L(v)-C_1$ for $v \in V(G')$. Then  $L'$ is a $\lambda'$-list assignment of $G'$, where $\lambda'=\lambda-\{1\}$. 
	Note that   $m_{\lambda'}(1)= a'=a-1$,    $m_{\lambda'}({\rm odd}) -m_{\lambda'}(1) = c' =c$ and $k'=k_{\lambda'}=k-1$.

	If $|P_1| \ge 5$, then $|V(G')| \le |V(G)|-5 =\min\{2k+a+c+1-5,2k+3a+2-5\} =\min\{2k'+a'+c'-1, 2k'+3a'+2\}$. Hence $G'$ is $\lambda'$-choosable and has a proper  $L'$-colouring of $G'$, which extends to a proper $L$-colouring by colouring the vertices in $P_1$ by colours from $C_1$, a contradiction. 
	So $|P_1| \le 4$.

	If  $c\le 2a+1$ and $|P_1| \ge   c-2a+4$, then    $|V(G')|= 2k+a+c+1-|P_1| \le  \min\{ 2k'+a'+c'+1,  2k'+3a'+2\}$ (as $|P_1| \ge 3$). Hence $G'$ is $\lambda'$-choosable, a  contradiction.  
\end{proof}

 Since $a \ge 1$, it follows from Claim \ref{clm-p_1} that {$c\ge 2a\ge 2$}, and if $c=2$, then $a=1$ and $|P_1|=3$. 

\begin{definition}
	\label{def-badtuple}
	A 4-tuple $(a_1,a_2,a_3,a_4)$ of integers is {\em reducible} if $$0 \le a_i \le |I_i|,  \  \sum_{i=1}^4a_i=k_{a+1} \text{ and } 2k_{a+1}+1 \le \sum_{i=1}^4ia_i \le 2k_{a+1}+2.$$
\end{definition}

\begin{claim}
	\label{clm-k_1}
	There is no reducible 4-tuple.
\end{claim}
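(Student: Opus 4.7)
The plan is a reduction argument that contradicts the minimality of $|\lambda|$. Suppose for contradiction that $(a_1,a_2,a_3,a_4)$ is reducible, and fix a bad $\lambda$-list assignment $L$ of $G$ with colour-class partition $C_1 \cup \cdots \cup C_q$. For each $i \in \{1,2,3,4\}$, select $a_i$ of the size-$i$ parts of $G$, and let $H$ be the subgraph induced by the union of all selected parts. Then $H$ is a complete $k_{a+1}$-partite graph with $|V(H)| = \sum_{i=1}^4 i a_i \leq 2k_{a+1}+2$, and $v \mapsto L(v) \cap C_{a+1}$ is a $k_{a+1}$-list assignment of $H$. Because $k_{a+1}$ is odd and at least $3$, the two exceptional graphs in Theorem \ref{thm-noel} (each requiring $k$ to be even) cannot coincide with $H$, so combining Ohba's theorem with Theorem \ref{thm-noel} shows that $H$ is $k_{a+1}$-choosable. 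Thus $H$ admits a proper colouring $\varphi_H$ using only colours from $C_{a+1}$.

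Next set $G' := G - V(H)$ and $L'(v) := L(v) \setminus C_{a+1}$ for $v \in V(G')$; then $L'$ is a $\lambda'$-list assignment of $G'$ where $\lambda' := \lambda \setminus \{k_{a+1}\}$, satisfying $|\lambda'| = q-1$, $k_{\lambda'} = k - k_{a+1}$, $m_{\lambda'}(1) = a$, and $m_{\lambda'}(\mathrm{odd}) = a+c-1$. The lower bound $\sum_{i=1}^4 i a_i \geq 2k_{a+1}+1$ built into the reducible definition forces $|V(G')| \leq n - 2 - 2k_{a+1}$, with $n = \min\{2k+a+c+2,\, 2k+3a+3\}$. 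A short case split on which branch of $n$ is active then yields $|V(G')| < \min\{2k_{\lambda'} + m_{\lambda'}(\mathrm{odd}) + 2,\; 2k_{\lambda'} + 3m_{\lambda'}(1) + 3\}$, and by the minimality of $|\lambda|$ this minimum is a lower bound for $\phi(\lambda')$ whenever $\lambda'$ is non-trivial. In the degenerate case $\lambda' = \{1 \star (k-k_{a+1})\}$, $\lambda'$-choosability of $G'$ is just $(k-k_{a+1})$-colourability, which holds because $G'$ has exactly $k - k_{a+1}$ parts. Either way, $G'$ admits a proper $L'$-colouring $\varphi'$, and since $\varphi_H$ and $\varphi'$ use colours from the disjoint sets $C_{a+1}$ and $C \setminus C_{a+1}$, together they form a proper $L$-colouring of $G$, contradicting the badness of $L$.

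The main technical hinge is the parity argument: the regime $|V(H)| \leq 2k_{a+1}+2$ is precisely where a complete $k_{a+1}$-partite graph can fail to be $k_{a+1}$-choosable, and the oddness of $k_{a+1}$ is essential to discarding both exceptional families in Theorem \ref{thm-noel}. The rest is arithmetic bookkeeping to check that $|V(G')|$ stays strictly below $\phi(\lambda')$ across both choices of $n$, together with the brief observation that the degenerate trivial-$\lambda'$ subcase causes no trouble.
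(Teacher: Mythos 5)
Your proposal is correct and follows essentially the same route as the paper's proof: colour the selected $k_{a+1}$ parts from $C_{a+1}$ via Theorem \ref{thm-noel} (using that $k_{a+1}$ is odd to rule out the exceptional graphs), then delete them and $C_{a+1}$, and apply the minimality of $|\lambda|$ to the resulting $\lambda'$-list assignment; your arithmetic in both branches of $n$ checks out. The only differences are cosmetic extra care on your part (explicitly invoking Ohba's theorem when $|V(H)|=2k_{a+1}+1$, and handling the trivial-$\lambda'$ subcase, which in fact cannot occur here since $c\ge 2$ at this stage).
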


\begin{proof}
	Assume to the contrary that $(a_1,a_2,a_3,a_4)$ is a reducible 4-tuple. 
	Let $X$ be a subgraph of $G$ consisting of $a_i$ partite sets of $G$ of size $i$ for $1 \le i \le 4$, and let $G_1=G[X]$. Then $G_1$ is a complete $k_{a+1}$-partite graph with 
	$  |V(G_1)|\le 2k_{a+1}+2$. By Theorem \ref{thm-noel}, $G_1$ is $k_{a+1}$-choosable, and hence  there is a proper $L$-colouring $\psi$ of $G_1$ using colours from $C_{a+1}$. 
	
	Let $\lambda'=\lambda-\{k_{a+1}\}$, $G'=G-X$, and $L'(v) = L(v)-C_{a+1}$  for each vertex $v$ of $G'$. Then $L'$ is a $\lambda'$-list assignment of $G'$.
	Note that $k'=k_{\lambda'}=k-k_{a+1}$, $a'=m_{\lambda'}(1)=a$ and $c'= m_{\lambda'}({\rm odd}) - m_{\lambda'}(1)=c-1$, and $G'$ is a complete $k'$-partite graph with $|V(G')| \le |V(G)|- (2k_{a+1}+1)$.

	If $c\le 2a+1$, then $c'=c-1\le 2a=2a'$ and 
	\begin{eqnarray*}
		|V(G')|&\le& 2k+a+c+1-(2k_{a+1}+1)=2k'+a'+c'+1.
	\end{eqnarray*}

	If $c\ge 2a+2$, then $c'=c-1\ge 2a+1=2a'+1$ and	
	\begin{eqnarray*}
		|V(G')|&\le& 2k+3a+2-2k_{a+1}-1=2k'+3a'+1\le 2k'+a'+c'.
	\end{eqnarray*}	
	
	By the minimality of $|\lambda|$, $G'$ is $\lambda'$-choosable. Let $\phi$ be an $L'$-colouring of $G'$. Then $\phi \cup \psi$ is a proper $L$-colouring of $G$,  
	a contradiction.
\end{proof}

If $|I_2| \ge k_{a+1}-1$, then since $|I_3|+|I_4| \ge 1$, $(0,k_{a+1}-1, 0,1 )$ or $(0,k_{a+1}-1, 1,0 )$ is a reducible 4-tuple, contrary to Claim \ref{clm-k_1}. So 
\begin{equation}
\label{eq-1}
 |I_2| \le k_{a+1}-2.
\end{equation}

\begin{claim}
	\label{clm-I_1}
 If $c \ge 3$, then $|I_1| \ge  \frac 23 k_{a+1}$.  If $c=2$, then $ |I_1| \ge (k_{a+1}-1)/2$.
\end{claim}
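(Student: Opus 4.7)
The plan is to prove both parts of the claim by a direct structural arithmetic, without needing to exhibit a new reducible $4$-tuple. The key point is that the inequality (\ref{eq-1}) together with the minimum possible value of $k_\lambda$ already forces $|I_1|$ to be as large as claimed.

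First I would record the identities that follow from $\sum_{i=1}^{4}|I_i|=k$ and $\sum_{i=1}^{4} i\,|I_i|=|V(G)|$, namely
\[
|I_3|+2|I_4|=D+|I_1|, \qquad |I_2|=k-2|I_1|-D+|I_4|,
\]
where $D:=|V(G)|-2k$. From the case distinction just below Claim~\ref{clm-p_1}, $D=a+c+1$ when $c\le 2a+1$ and $D=3a+2$ when $c\ge 2a+2$. Combining the second identity with $|I_4|\ge 0$ and the bound $|I_2|\le k_{a+1}-2$ from (\ref{eq-1}) yields
\[
|I_1|\ge \frac{k-D-k_{a+1}+2}{2}.
\]
Since $\lambda$ contains $a$ copies of $1$ and $c$ odd parts each at least $k_{a+1}$ (with any remaining parts non-negative), $k=k_\lambda\ge a+c\,k_{a+1}$. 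Substituting,
\[
|I_1|\ge \frac{a+(c-1)k_{a+1}-D+2}{2}.
\]

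It then remains to plug in the value of $D$ and verify that the right-hand side dominates the claimed bound. When $c=2$ (so $a=1$ and $D=4$) the right-hand side simplifies exactly to $(k_{a+1}-1)/2$, giving the second part of the claim. When $c\ge 3$ and $c\le 2a+1$, substituting $D=a+c+1$ gives $(c-1)(k_{a+1}-1)/2$, and the inequality $(c-1)(k_{a+1}-1)/2\ge \tfrac{2}{3}k_{a+1}$ rearranges to $3(c-1)(k_{a+1}-1)\ge 4k_{a+1}$, which holds since $c-1\ge 2$ and $k_{a+1}\ge 3$. When $c\ge 3$ and $c\ge 2a+2$, substituting $D=3a+2$ gives $((c-1)k_{a+1}-2a)/2$, and $((c-1)k_{a+1}-2a)/2\ge \tfrac{2}{3}k_{a+1}$ rearranges to $(3c-7)k_{a+1}\ge 6a$, which holds because $c\ge 2a+2$ gives $3c-7\ge 6a-1$ and $k_{a+1}\ge 3$ then forces $(3c-7)k_{a+1}\ge 3(6a-1)=18a-3\ge 6a$ for $a\ge 1$.

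There is no real obstacle; the argument is an arithmetic chain. The main care needed is in the case split on $c\le 2a+1$ versus $c\ge 2a+2$ (which determines $D$) and in using the minimum estimate $k\ge a+c\,k_{a+1}$, which encodes the structure of $\lambda$ laid out at the beginning of the section.
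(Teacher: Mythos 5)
Your proof is correct and follows essentially the same route as the paper's: both rest on the vertex count $\sum_i i|I_i|=|V(G)|=n-1$, the bound $|I_2|\le k_{a+1}-2$ from (\ref{eq-1}), and the estimate $k\ge a+ck_{a+1}$, and both arrive at the identical inequality $2|I_1|\ge 3k-n+3-k_{a+1}$ before the case analysis on $c$. The only cosmetic difference is that you package the count as exact identities with the slack $|I_4|\ge 0$, whereas the paper uses the one-line bound $3(k-|I_1|-|I_2|)+2|I_2|+|I_1|\le |V(G)|$ and treats $c=2$ by a separate (but equivalent) computation.
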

\begin{proof}
	Observe that $3(k-|I_2|-|I_1|)+2|I_2|+|I_1|\le |V(G)|= n-1$ and $k \ge a+ck_{a+1}$. Therefore $|I_2|+2|I_1|\ge 3k-n+1$. If $c\le 2a+1$, then $n=2k+a+c+2$ and hence 
	 $$
	 2|I_1|\ge k-a-c-1-|I_2| \ge (a+c k_{a+1})-a-c-1-(k_{a+1}-2)= (c-1)(k_{a+1}-1). 
	$$ 
	 If $c\ge 2a+2$, then $n=2k+3a+3$ and hence 
 $$
	 2|I_1|\ge k-3a-2-|I_2| \ge   (c-1)k_{a+1}-2a\ge (c-1)(k_{a+1}-1).
$$ 	

If $c \ge 3$, then $|I_1| \ge k_{a+1}-1 \ge \frac 23 k_{a+1}$  as $k_{a+1}\ge 3$. 

If $c=2$, then $a=1$ and $|P_1|=3$ and $n=2k+a+c+2=2k+5$. Hence $$3|I_3|+2|I_2|+|I_1|=n-1=2k+4 \text{  and } |I_3|+|I_2|+|I_1|=k.$$ Therefore $2|I_3|+|I_2|=k+4$.  As $|I_2|+2|I_1|\ge k-a-c-1=k-4$ and hence $$2k=2(|I_1|+|I_2|+|I_3|)=(2|I_3|+|I_2|)+(|I_2|+2|I_1) \ge (k+4)+(k-4)=2k. $$ Therefore $k-4=|I_2|+2|I_1| \le k_{a+1}-2 + 2|I_1| $. As $k\ge 1+2 k_{a+1} $, we have $ |I_1| \ge (k_{a+1}-1)/2$. 
\end{proof}

\medskip
\noindent	{\bf Case 1}:   $|P_1|=4$.
\medskip

Let $b_1= \frac{ k_{a+1}-|I_2|-1}{2}$. Note that $\lceil b_1 \rceil+|I_2|+ \lfloor b_1 \rfloor+1=k_{a+1}$ and  $\lceil b_1 \rceil+2|I_2|+ 3\lfloor b_1 \rfloor+4\times 1=2k_{a+1}+2$ (or $2k_{a+1}+1$) if $2b_1$ is even (or odd). As $c \ge 3$, by  Calim \ref{clm-I_1},    $|I_1|\ge \frac{2}{3}k_{a+1}\ge \lceil b_1 \rceil$. 

If $|I_3|\ge \lfloor b_1 \rfloor$, then $(\lceil b_1 \rceil, |I_2|, \lfloor b_1 \rfloor, 1 )$ is a reducible 4-tuple,  contrary to Claim \ref{clm-k_1}. Therefore,

\begin{equation}
\label{eq-x_3}
|I_3|< \lfloor \frac{k_{a+1}-|I_2|-1}{2} \rfloor.
\end{equation}

Let $b_2=\frac{k_{a+1}-|I_2|-2|I_3|-1}{3}$. Note that $|I_1|\ge\frac{2}{3}k_{a+1} \ge  2 \lceil b_2\rceil +|I_3|$.

If $|I_4|\ge \lceil b_2 \rceil +1$, then it is straightforward to check that  
	
\begin{itemize}
	\item either $3b_2\equiv 0 \pmod{3}$, and $(2b_2+|I_3|,|I_2|,|I_3|,b_2+1)$ is  a reducible 4-tuple,  
	\item or $3b_2\equiv 1 \pmod{3}$, and $(2\lceil b_2\rceil +|I_3|-1,|I_2|,|I_3|,\lfloor b_2\rfloor +1)$ is  a reducible 4-tuple, 
	\item or $3b_2\equiv 2 \pmod{3}$,  and  $(2\lceil b_2\rceil +|I_3|,|I_2|-1,|I_3|,\lceil b_2\rceil +1)$ is  a reducible 4-tuple,
\end{itemize}
a contradiction.
Therefore,
\begin{equation}
\label{eq-x_4}
|I_4|<  \lceil b_2 \rceil +1 = \lceil\frac{k_{a+1}-|I_2|-2|I_3|-1}{3} \rceil +1.
\end{equation}

It follows from  (\ref{eq-1}), (\ref{eq-x_3}) and (\ref{eq-x_4}) that \begin{eqnarray*}
	|V(G)|&=&4|I_4|+3|I_3|+2|I_2|+|I_1|\\
	&\le& 4\times \frac{k_{a+1}-|I_2|-2|I_3|-1}{3}+4+3|I_3|+2|I_2|+|I_1|\\
	&\le&\frac{4}{3}k_{a+1}+\frac{2}{3}|I_2|+\frac{1}{3} \frac{k_{a+1}-|I_2|-1}{2}+|I_1|+\frac{8}{3}\\
	&\le&\frac{3}{2}k_{a+1}+\frac{1}{2}(k_{a+1}-2)+|I_1|+\frac{5}{2}\\
	&\le &2k+\frac{1}{2}.
\end{eqnarray*}
The last inequality holds because $k\ge \sum_{i=1}^{c}k_{a+i}\ge ck_{a+1}\ge 2k_{a+1}$ and $|I_1|\le k-1$. This contradicts to $|V(G)|=n-1\ge 2k+1$.

\medskip
\noindent	{\bf Case 2}:   $|P_1|=3$.
\medskip

Let $b_3=\frac{ k_{a+1}-|I_2|-1}{2}$. Note that $\lfloor b_3 \rfloor+|I_2|+ (\lceil b_3 \rceil+1)=k_{a+1}$ and  $\lfloor b_3 \rfloor+2|I_2|+ 3(\lceil b_3 \rceil+1)=2k_{a+1}+2$ (or $2k_{a+1}+1$) if $2b_3$ is odd (or even).  

By  Claim \ref{clm-I_1}, $|I_1|\ge \frac{1}{2}(k_{a+1}-1)\ge b_3$. 

If $|I_3|\ge \lceil b_3 \rceil+1$, then $(\lfloor b_3 \rfloor, |I_2|, \lceil b_3 \rceil+1, 0 )$ is a reducible 4-tuple,  contrary to Claim \ref{clm-k_1}. Therefore,

\begin{equation}
\label{eq-x3 x=3}
|I_3|< \lceil \frac{k_{a+1}-|I_2|-1}{2} \rceil+1.
\end{equation}

It follows from (\ref{eq-1}) and (\ref{eq-x3 x=3}) that
 \begin{eqnarray*}
	|V(G)|
	&\le& 3\times \frac{k_{a+1}-|I_2|-1}{2}+3+2|I_2|+|I_1|\\
	&\le&\frac{3}{2}k_{a+1}+\frac{1}{2}(k_{a+1}-2)+|I_1|+\frac{3}{2}\\
	&\le&2k-\frac{1}{2}.
\end{eqnarray*}
The last inequality holds because $k\ge \sum_{i=1}^{c}k_{a+i}\ge ck_{a+1}\ge 2k_{a+1}$ and $|I_1|\le k-1$. This contradicts to $|V(G)|=n-1\ge 2k+1$.

This completes the proof of Theorem \ref{thm-main}.


	\bibliography{Reference}

\end{document}